\newtheorem{thm}{Theorem}[section]
\newtheorem{theorem}{Theorem}[section]
\newtheorem{lem}[thm]{Lemma}
\newtheorem{prop}[thm]{Proposition}
\newtheorem{remark}[thm]{Remark}
\newtheorem{definition}[thm]{Definition}
\newtheorem{corollary}[thm]{Corollary}
\numberwithin{equation}{section}
\def\C{ {\mathbb C} }
\def\R{ {\mathbb R} }
\def\Z{ {\mathbb Z} }
\def\N{ {\mathbb N} }
\def\T{ {\mathbb T} }
\def\Vbeta{V^\infty_{\beta,\lambda}}
\begin{document}
\title[]{Conjugate phase retrieval in shift-invariant spaces generated by a Gaussian}
\author{Yang Chen and Cheng Cheng  }

\address{Chen: Key Laboratory of Computing and Stochastic Mathematics (Ministry of Education), School of Mathematics and Statistics,
 Hunan Normal University, Changsha, Hunan 410081, P. R. China,
 email: ychenmath@hunnu.edu.cn}

\address{Cheng:
School of Mathematics, Sun Yat-sen University, Guangzhou, Guangdong,
510275, China, email: chengch66@mail.sysu.edu.cn}

\thanks{This project is partially supported by  National Key RD Program of China (No. 2024YFA1013703),  National Natural
Science Foundation of China (12171490),  Fundamental Research Funds for the Central Universities, Sun Yat-sen University (24lgqb019),  Research Foundation of Education Bureau of Hunan Province, China (24B0107) and  Natural
Science Foundation of Hunan Province, China (2025JJ50008).}
\maketitle
\begin{abstract}
Conjugate phase retrieval aims to recover a function, up to a unimodular constant and  conjugation, from its magnitude measurements. In this paper, we investigate conjugate phase retrieval in a shift-invariant space generated by a Gaussian function, which is a phaseless sampling and reconstruction problem from phaseless Hermite samples, namely the magnitude measurements of a function and its derivative taken on a discrete sampling set. We first show that the modulus of any function in the Gaussian shift-invariant space, as well as the modulus of its derivative, can be determined from its phaseless Hermite samples on a discrete set. We then prove that any function in this Gaussian-generated shift-invariant space can be uniquely recovered, up to a unimodular constant and conjugation, from its phaseless Hermite samples
 on a discrete set. For functions whose coefficient sequences are finitely supported, we further provide an explicit reconstruction procedure.  
\end{abstract}

\section{Introduction}

\medskip 
Shift-invariant spaces play a fundamental role in wavelet analysis,  approximation theory, and signal processing. Given a generator $\phi$, the associated shift-invariant space is defined by
\begin{equation}\label{sis.def.intro}
V(\phi)
=
\Big\{
\sum_{k\in\mathbb Z} c_k \, \phi(\cdot-k)
:\ c_k\in\mathbb C
\Big\}.
\end{equation}
Such spaces provide a rigorous functional framework for modeling and
approximating signals with prescribed spectral characteristics and structural
constraints
\cite{AG01, AST05,Christensenbook, daubechiesbook, DDR94, jia92, mallatbook,unser00}.  

 Phase retrieval has been studied in  various fields, including X-ray crystallography, diffraction imaging, optics, and related areas.  It considers the recovery of a signal from the magnitudes of its linear measurements, up to a trivial ambiguity \cite{BCE06, bunk2007diffractive, F78,harrison1993phase,HES16,H01,JEH16,millane90,walther1963question}. 
	Phase retrieval in a shift-invariant space is an infinite-dimensional phaseless sampling and reconstruction problem, where one seeks to recover a function from magnitude samples on a continuous domain or a discrete sampling set \cite{Chen2020,Cheng2021,Cheng2019, CS2021, grochenig2020,Romero2018,shenoy2016exact,Thakur11}. 

    Phase retrieval for real-valued signals has been established in several classes of shift-invariant spaces.
Thakur showed that a real-valued bandlimited function can be uniquely determined, up
to a global sign, from its magnitude samples with sampling rate more than twice Nyquist rate \cite{Thakur11}. 
The bandlimited functions belong to the shift-invariant space generated by the sinc
function $\frac{\sin(\pi \cdot)}{\pi \cdot}$. Later, 
Gr\"ochenig proved that every real-valued function in a shift-invariant space
generated by a Gaussian $\phi_\lambda := e^{-\lambda \cdot^2}$ can be recovered, up to
a sign, from their phaseless samples on a separated set with  lower Beurling density greater than 2, and also showed that phase retrieval
is impossible for complex-valued signals in the same space \cite{grochenig2020}. 
Related phaseless sampling and reconstruction problems for real-valued signals, often
referred to as sign retrieval, were further studied by Romero for shift-invariant
spaces generated by totally positive functions \cite{Romero2018}.

A different phenomenon arises when the generator $\phi$ is a compactly supported real-valued function. Not all real-valued functions in $V(\phi)$ are phase retrievable.  In \cite{Chen2020, Cheng2019, CS2021}, the notion of nonseparability was
introduced to characterize phase retrievability in $V(\phi)$.  Specifically, a real-valued function $f\in V(\phi)$ is said to be nonseparable if
there do not exist nonzero functions $f_1,f_2\in V(\phi)$ such that
\[
f = f_1 + f_2, \ \ 
f_1 f_2 = 0.
\]
Equivalently, $f$ cannot be decomposed into two nontrivial components supported on disjoint sets. It was shown in \cite{Chen2020, Cheng2019, CS2021} that a real-valued function in $V(\phi)$ is phase retrievable from its magnitude samples if and only if it is nonseparable, and that such functions admit reconstruction, up to a global sign,
from phaseless samples taken on a discrete set with finite sampling density.

Let $\phi$ be a real-valued generator and consider the complex shift-invariant space $V(\phi)$. Then $V(\phi)$ is invariant under complex conjugation, that is a function
$f \in V(\phi)$ belongs to the space if and only if its complex conjugate
$\overline f$ also belongs to $V(\phi)$. 
Since $f$ and $\overline f$ satisfy
$|f(x)| = |\overline f(x)|$ for all $x \in \mathbb R$, magnitude  measurements
cannot distinguish between them, which makes classical phase retrieval impossible in
this setting. To address this intrinsic ambiguity, the notion of
\emph{conjugate phase retrieval} was introduced
\cite{Chen2022, CW23, Cheng2025, Lai, LLW21, Mcdonald}.
A linear space $\mathcal C$ is said to be conjugate invariant if
$\overline f \in \mathcal C$ whenever $f \in \mathcal C$. 
Given a conjugate-invariant space $\mathcal C$ and a collection of linear functionals $\Psi$, conjugate phase retrieval aims to determine a signal
$f \in \mathcal C$, up to a unimodular constant and conjugation, from its phaseless
measurements $\{ |\psi(f)| : \psi \in \Psi \}$.
More precisely, $f$ is conjugate phase retrievable if
\[
g \in \Xi_{f,\Psi}
\ \text{if and only if} \
g = \alpha f \ \text{or} \ g = \alpha \overline f
\ \text{for some } \alpha \in \mathbb T,
\]
where
\[
\Xi_{f,\Psi}
:= \{ g \in \mathcal C : |\psi(g)| = |\psi(f)|,\ \psi \in \Psi \},
 \ {\rm and } \ 
\mathbb T := \{ \alpha \in \mathbb C : |\alpha| = 1 \}.
\]

In \cite{Chen2022}, it was shown that not all signals in a conjugate-invariant space
are conjugate phase retrieval from magnitude only measurements.
Similar to the characterization of phase retrievability by non-separability in real
linear spaces, the authors of \cite{Chen2022} established a general characterization
of conjugate phase retrieval from magnitude measurements $|\Psi(f)|:=\{|f(x)|, x\in\Gamma\}$, where $\Gamma$ is either the whole domain  of $f$ or its subset.  See also \cite{Lai} for finite-dimensional results arising from generic real frame vectors, 
and \cite{CW23} for conjugate phase retrieval in complex shift-invariant spaces
generated by compactly supported real-valued functions.

Structured phaseless measurements have been introduced to enable (conjugate) phase retrieval for complex-valued signals 
\cite{Chen2022, CQ22, Cheng2025, jaming14, LLW21,Mcdonald,  pohl2014}.
For instance, McDonald \cite{Mcdonald} proved that any complex bandlimited function
that extends to an entire function of finite order can be uniquely determined, up to
a unimodular constant and conjugation, from the joint magnitude measurements
$|f(x)|$ and $|f'(x)|$ on the whole real line.
Conjugate phase retrieval for complex-valued bandlimited signals from phaseless
structured convolutions was studied in \cite{LLW21}. In \cite{CQ22}, conjugate phase retrieval from linear canonical transforms was established for functions in
shift-invariant spaces generated by B-splines. The work \cite{Chen2022} considered the determination of vector-valued functions,
up to an orthogonal matrix, from structured phaseless measurements on graphs,  and conjugate phase retrieval for
complex-valued graph signals using magnitudes on vertices together with relative
magnitudes on edges was investigated in \cite{Cheng2025}.

Motivated by these results, the present paper investigates conjugate phase retrieval in the complex
shift-invariant space generated by the Gaussian function $\phi_\lambda= e^{-\lambda \cdot^2}$,
\begin{equation}\label{sis.def}
V^\infty_{\beta,\lambda}
:= \Big\{
f = \sum_{k \in \mathbb{Z}} c_k e^{-\lambda(\cdot - \beta k)^2}
:\ \{c_k\}_{k \in \mathbb{Z}} \in \ell^\infty(\mathbb{Z})
\Big\},
\end{equation}
where the step size $\beta > 0$ and the parameter $\lambda > 0$. Hermite sampling, which incorporates both function values and derivative values at
sampling points, has been widely used in applications such as aircraft instrument
communication and air traffic control simulation \cite{CQ22, CW23, GRS2020, LLW21,  Mcdonald}.
Inspired by this idea, we consider conjugate phaseless sampling and reconstruction
in the shift-invariant space $V^\infty_{\beta,\lambda}$ from the magnitude samples
\[
|f(\gamma)| \ \text{and} \ |f'(\gamma)|, \ \gamma \in \Gamma,
\]
taken on a discrete set $\Gamma \subset \mathbb{R}$. Our main result, stated in Theorem \ref{main.thm}, shows that any function
$f \in V^\infty_{\beta,\lambda}$ satisfying a mild assumption can be uniquely
determined, up to a unimodular constant and conjugation, from the phaseless Hermite
samples $|f(\gamma)|$ and $|f'(\gamma)|$, $\gamma \in \Gamma$, provided that the
sampling set $\Gamma$ has lower Beurling density
$D^{-}(\Gamma) > 2\beta^{-1}$.
Hermite functions extend the Gaussian function by preserving its  analytic properties while forming a complete orthonormal basis, and they play a central role in time–frequency analysis \cite{abreu2014, cohenbook, szego75}.  Motivated by these features, we consider shift-invariant spaces generated by Hermite functions. In Theorem~\ref{hermite.cpr.prop}, we establish that any function in a shift-invariant space generated by a Hermite function admits conjugate phase retrieval from its Hermite phaseless samples taken on any sequentially compact infinite sampling set.

  This paper is organized as follows. In Section \ref{phaseless.sec}, we show that two functions in the shift-invariant space $\Vbeta$  have the same phaseless Hermite samples on some discrete set if and only if they have the same  pointwise magnitudes everywhere on the whole real line, see Theorem \ref{cpr.sis.thm1}.  In Section \ref{cpr.sec}, we show that  the functions in $\Vbeta$,  whose coefficient sequences satisfy suitable decay
conditions, are conjugate phase retrieval from its phaseless Hermite samples on a sampling set with lower Beurling density 
$D^{-}(\Gamma)> 2\beta^{-1}$, see Theorem \ref{main.thm}. We also show that functions in the shift-invariant space generated by a Hermite function admit conjugate phase retrieval from their Hermite phaseless samples on a  sequentially compact infinite set. 
In Section \ref{spec.sec}, motivated by practical considerations,  we consider the conjugate phase retrieval of the signals in $\Vbeta$ with finite coefficient sequence, and  present an explicit reconstruction algorithm  for  signals in $V^\infty_{1,1}$. 
The proofs of 
technical lemmas are collected in Appendix \ref{lem.pr.sis}-\ref{appendix.sec}.

\smallskip

{\bf  Notation}:  In this paper, $\ast$ denotes the classical convolution operator.
A \emph{multiset} is a generalization of  a set in which elements are allowed to appear with multiplicity.  For a multiset $V$, the multiplicity of an element $x$ in $V$, denoted by $m_V(x)$, is a nonnegative integer  representing the number of times $x$ occurs in $V$.  
The multiset union, intersection, and difference of the multisets $V$ and $W$ are denoted by
$V \dot\cup W$, $V \dot\cap W$, and $V \dot\setminus W$, respectively.
These operations are defined through their multiplicity functions as follows:
for every element $x$ in $V$ or $W$, 
\begin{equation*}
m_{V\dot\cap W}(x)
= \min\{m_V(x),\, m_W(x)\},
\end{equation*}
\begin{equation*}
m_{V\dot\cup W}(x)
= \max\{m_V(x),\, m_W(x)\},
\end{equation*}
and
\begin{equation*}
m_{V\dot\setminus W}(x)
= \max\{m_V(x)-m_W(x),\,0\}.
\end{equation*}

\medskip

\section{Phaseless Hermite sampling in a shift-invariant space with Gaussian }\label{phaseless.sec}
\medskip

Given a signal  $f\in \Vbeta$ in \eqref{sis.def},  let
\begin{equation}\label{mf.def}
{\mathcal M}_{f}:=\{g\in \Vbeta: \   |g(x)|=|f(x)| {\rm\ and}\  |f'(x)|=|g'(x)| \ {\rm for \ all} \  x\in\R\}
\end{equation}
contain all functions $g\in  \Vbeta$ having the same Hermite magnitude measurements on the whole line  $\R$ as $f$ has. The phaseless Hermite sampling and reconstruction problem considers whether there exists a discrete sampling set $\Gamma$ such that
 \begin{equation*}\label{cpr.equal.def}
 {\mathcal M}_{f,\Gamma}=  {\mathcal M}_{f},
 \end{equation*}
where
\begin{equation}\label{mf.gamma.def}{\mathcal M}_{f,\Gamma}=\{g\in \Vbeta: \   |g(\gamma)|=|f(\gamma)| {\rm\ and}\  |f'(\gamma)|=|g'(\gamma)| \ {\rm for \ all} \  \gamma\in \Gamma\}.
\end{equation}

In this section, we show that for  any signal $f=\sum_{k\in \Z} c_ke^{-\lambda(\cdot-\beta k)^2}\in \Vbeta$   with the mild condition
\begin{equation}\label{signal.assump}
\{kc_k\}_{k\in \Z} \in \ell^\infty(\Z),
\end{equation}
 there exists a discrete sampling set $\Gamma$ with the lower Beurling density $D^-(\Gamma)>2\beta^{-1}$ such that the equality
 $${\mathcal M}_{f} =  {\mathcal M}_{f,\Gamma}$$ holds,  see  Theorem \ref{cpr.sis.thm1}.

A set $\Gamma\subset\R$  is said to have the lower Beurling density $D^{-}(\Gamma)$ if 
\begin{equation}\label{beurling.eq}
D^{-}(\Gamma):=\liminf_{r\rightarrow\infty} \inf_{x\in\R}\frac{\#\big(\Gamma\cap[x-r,x+r]\big)}{2r},
\end{equation}  see \cite{AG01,AST05,Cheng2019,CS2021,grochenig2020,GRS2018} and references therein. To illustrate this notion, consider the sampling set
\[
\Gamma=\{k,\;k+3^{-|k|},\;k+2^{-|k|}\}_{k\in\mathbb Z}.
\]
Except for finitely many overlaps, this set contains three sampling points in each
unit interval $[k,k+1)$ for large $|k|$. A direct counting argument shows that
\[
D^{-}(\Gamma)=3.
\]


\begin{theorem}\label{cpr.sis.thm1}
{\rm 
Let $f$  be a function in the shift-invariant space $V^\infty_{\beta, \lambda}$ satisfying \eqref{signal.assump}, and $\Gamma$ be a sampling set with the lower Beurling density $D^-(\Gamma)>2\beta^{-1}$. Then 
\begin{equation*}
M_{f,\Gamma}=M_f, 
\end{equation*} 
that is, the samples of $|f|$ and $|f'|$ on $\Gamma$ uniquely determine the
entire modulus functions $|f|$ and $|f'|$ on $\mathbb R$.} 
\end{theorem}

From the above theorem, we know that there exists a discrete set $\Gamma \subset \mathbb{R}$ of finite density such that
the modulus $|f|$ and $|f'|$ can be uniquely determined everywhere on $\mathbb{R}$  from the samples of $|f|$ and $|f'|$ on $\Gamma$.

\smallskip

  To prove Theorem \ref{cpr.sis.thm1}, we recall some basic results in the shift-invariant space generated by a Gaussian.

\begin{lem}\label{f.abs.lem}{\rm[Lemma 3, \cite{grochenig2020}]}
{\rm If $f=\sum_{k\in\Z}c_k e^{-\lambda(\cdot-\beta k)^2}\in V^\infty_{\beta, \lambda}$, 
 then $|f|^2\in V^\infty_{\frac{\beta}{2}, 2\lambda}$.
}
\end{lem}
The above theorem on the function $|f|^2, f\in \Vbeta$ is crucial in the  (conjugate) phase retrieval  of a function  in the complex shift-invariant space generated by a Gaussian. 
For the completeness, we include a proof in the Appendix \ref{lem.pr.sis}. 

 \begin{lem}\label{sampling.lem}{\rm[Theorem 4.4 (a), \cite{GRS2018}]}
{\rm 
 If the set $\Gamma\subset\R$ has the lower Beurling density $D^-(\Gamma)>\beta^{-1}$, then $\Gamma$ is a uniqueness set for $V_{\beta,\lambda}^{\infty}$.

  }
\end{lem}


From the above lemmas, we remark that the modulus function $|f|$ can be uniquely determined from the phaseless samples of $|f|$ taken on  a sampling set $\Gamma$ with lower Beurling density $D^-(\Gamma)> 2\beta^{-1}$.

Now we are ready for the proof of Theorem \ref{cpr.sis.thm1}.
\begin{proof}[Proof of Theorem \ref{cpr.sis.thm1}]
Without loss of generality, we set $\beta=1$, as  $f \in V^\infty_{\beta,\lambda}$
if and only if  $f\in V^\infty_{1,\beta^{2}\lambda}$. 

Write $V^\infty_{1,\lambda} := V^\infty_{\beta=1,\lambda}$.  Denote $\tilde{c}_k=c_ke^{-\lambda k^2}$ and $r_k=\tilde c\ast \bar {\tilde c}(k)e^{\lambda k^2/2}$ for all $k\in\Z$. 
By Lemmas \ref{f.abs.lem} and \ref{sampling.lem}, we know that the coefficients $r_k, k\in\Z$, can be uniquely determined from $|f(\Gamma)|$ as the modulus 
\begin{equation}\label{f.mod}
|f(x)|^2=\sum_{k\in\Z} r_ke^{-2\lambda(x-k/2)^2}\in V^\infty_{\frac{1}{2}, 2\lambda}.
\end{equation}
 Next we  prove that $|f'|$ can be uniquely determined from $|f(\Gamma)|$ and $|f'(\Gamma)|$.

 Write $\omega(x):=\sum_{k\in\Z}kc_ke^{-\lambda(x-k)^2}$.  By \eqref{signal.assump} and  Lemma \ref{f.abs.lem}, we have that
 \begin{equation}\label{h2.def}
\omega \in V^{\infty}_{1,\lambda} \ \ {\rm and} \ \
|\omega|^2\in V^{\infty}_{\frac{1}{2}, 2\lambda}.
\end{equation}
Observe that
\begin{equation*}\label{f'.def}
f'(x)=2\lambda\sum_{k\in\Z}(k-x)c_ke^{-\lambda(x-k)^2}=2\lambda(\omega(x)-xf(x)).
\end{equation*}
By a direct calculation, we have 
\begin{eqnarray}\label{f'.def.1}
\frac{|f'(x)|^2}{4\lambda^2}\hskip-0.05in&\hskip-0.1in=&\hskip-0.1in x^2|f(x)|^2+|\omega(x)|^2 -2x\Re( \omega(x)\overline{f(x)})\nonumber\\
\hskip-0.05in&\hskip-0.1in=&\hskip-0.1in x^2|f(x)|^2+|\omega(x)|^2 -2x\Re( \sum_{k,j\in\Z}kc_k\bar{c}_je^{-\lambda((x-k)^2+(x-j)^2)})\nonumber\\
\hskip-0.05in&\hskip-0.1in=&\hskip-0.1in x^2|f(x)|^2+|\omega(x)|^2 -x\sum_{k,j\in\Z}(k+j)c_k\bar{c}_je^{-\lambda((x-k)^2+(x-j)^2)}\nonumber\\
\hskip-0.05in&\hskip-0.1in=&\hskip-0.1in x^2|f(x)|^2+|\omega(x)|^2 -x\sum_{n\in\Z}nr_ne^{-2\lambda(x-\frac{n}{2})^2}.
\end{eqnarray}
Then the evaluations of $|\omega|$ on the discrete set $\Gamma$ 
\begin{equation*}\label{h.sample}
|\omega(\gamma)|^2=\frac{|f'(\gamma)|^2}{4\lambda^2}+\gamma\sum_{n\in\Z}nr_ne^{-2\lambda(\gamma-\frac{n}{2})^2}-\gamma^2|f(\gamma)|^2, \gamma\in \Gamma,
\end{equation*}
can be obtained by the coefficients $r_k, k\in \Z$ and the phaseless Hermite samples $|f(\gamma)|$ and $|f'(\gamma)|, \gamma\in\Gamma$.
Together with Lemma \ref{sampling.lem} and \eqref{h2.def}, we know that the  function $|\omega|^2$ can be uniquely determined. 
Hence $|f'|$ can be uniquely determined by \eqref{f'.def.1}.

Then for any signal $g\in {\mathcal M}_{f,\Gamma}$, i.e., $|g(\gamma)|=|f(\gamma)|$ and  $|g'(\gamma)|=|f'(\gamma)|$, $\gamma\in \Gamma$,  we have $|g(x)|=|f(x)|$ and $|g'(x)|=|f'(x)|$ for all $x\in \R$ and hence $g\in {\mathcal M}_f$. This together with ${\mathcal M}_{f}\subset {\mathcal M}_{f, \Gamma}$ proves our conclusion. 
\end{proof}

%
%

\medskip
\section{Conjugate phaseless sampling and reconstruction}\label{cpr.sec}

\medskip

 For any signal  $f\in \Vbeta$, and   $\alpha \in\T$,  the signals $\alpha f, \alpha \overline{f}$ are in the shift-invariant spaces $\Vbeta$  and have the same Hermite magnitude measurements  as $f$ has on the whole line $\R$. Then we have 
$$\{\alpha f, \alpha \overline{f}:\alpha\in \mathbb T\} \subset {\mathcal M}_f. $$
The conjugate phase retrieval considers whether the converse of the above inclusion is true, that is, whether $\mathcal{M}_f$ consists only of unimodular multiples of $f$ and its conjugate. 

In this section, we show that this is indeed the case under mild assumptions. More precisely, we prove that any function
\[
f(x)=\sum_{k\in\Z} c_k e^{-\lambda(x-\beta k)^2}\in \Vbeta
\]
with $\{k c_k\}_{k\in\Z}\in\ell^\infty(\Z)$ can be uniquely determined, up to a unimodular constant and conjugation, from the phaseless samples
\[
\{|f(\gamma)|, |f'(\gamma)|:\gamma\in\Gamma\},
\]
taken on a discrete set $\Gamma\subset\R$ with lower Beurling density $D^-(\Gamma)> 2\beta^{-1}$. Equivalently,
\begin{equation}\label{cpr.def}
\mathcal{M}_f=\{\alpha f, \alpha\overline{f}: \alpha \in\T\}.
\end{equation}

\begin{thm}\label{main.thm}
{\rm Let  $f(x)=\sum_{k\in\Z}c_ke^{-\lambda(x-k\beta)^2}$ be a function in $\Vbeta$ satisfying \eqref{signal.assump}, and   $\Gamma\subset \R$ be a  discrete set with the lower Beurling density $D^-(\Gamma)>2\beta^{-1}$. 
 Then $f$
  can be uniquely determined, up to a unimodular constant and conjugation, from its phaseless Hermite samples $|f(\gamma)|$ and $|f'(\gamma)|, \gamma\in \Gamma$.}
\end{thm}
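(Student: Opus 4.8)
Given Theorem~\ref{cpr.sis.thm1}, the plan is short. After the reduction $\beta=1$ (using that $f\in V^\infty_{\beta,\lambda}$ iff $f\in V^\infty_{1,\beta^2\lambda}$), Theorem~\ref{cpr.sis.thm1} already yields ${\mathcal M}_{f,\Gamma}={\mathcal M}_f$ whenever $D^-(\Gamma)>2$, so it suffices to establish the pointwise identity
\[
{\mathcal M}_f=\{zf:z\in\T\}\cup\{z\bar f:z\in\T\}.
\]
The inclusion ``$\supseteq$'' is immediate: for real $x$ one has $\bar f(x)=\overline{f(x)}$, hence $|\bar f|=|f|$ and $|(\bar f)'|=|\overline{f'}|=|f'|$ on $\R$, and multiplying by $z\in\T$ changes no magnitude. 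Everything thus reduces to showing that two functions in $\Vbeta$ with identical Hermite magnitudes on all of $\R$ coincide up to a unimodular constant and conjugation.

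For the remaining inclusion, fix $g\in{\mathcal M}_f$ with $f\not\equiv 0$. The crucial point --- and the place where the Gaussian generator enters essentially, in contrast with the compactly supported case where ``non-separability'' is needed --- is that every element of $\Vbeta$ extends to an entire function, since $|c_ke^{-\lambda(z-k)^2}|\le\|c\|_\infty e^{\lambda(\Im z)^2}e^{-\lambda(\Re z-k)^2}$ makes the defining series converge locally uniformly on $\C$. Hence the real zero set $Z$ of $f$ is discrete and $\R\setminus Z$ is a disjoint union of open intervals; fix one, say $I$. On $I$ we have $|g|=|f|>0$, so $\epsilon:=g/f$ is real-analytic from $I$ into $\T$; writing $\epsilon=e^{i\theta}$ and $f=\rho e^{i\phi}$ locally, the derivatives $\theta'=\Im(g'/g)-\Im(f'/f)$ and $\phi'=\Im(f'/f)$ are globally real-analytic on $I$. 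From $g'=\epsilon\,(i\theta'f+f')$ and $|g'|=|f'|$ on $I$, a one-line computation gives
\[
\theta'\,(\theta'+2\phi')\,|f|^2=0\qquad\text{on }I.
\]
Since $|f|^2>0$ on the connected set $I$ and both factors are real-analytic, one of them vanishes identically on $I$: either $\theta'\equiv 0$, whence $\epsilon$ is a constant $z\in\T$ and $g=zf$ on $I$; or $\theta'+2\phi'\equiv 0$, whence $g/\bar f=e^{i(\theta+2\phi)}$ is locally constant, so $g=z\bar f$ on $I$ for some $z\in\T$ (note $\bar f$ extends to the entire function $z\mapsto\overline{F(\bar z)}$, where $F$ is the entire extension of $f$, and is nonvanishing on $I$). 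In either case $g-zf$ (resp.\ $g-z\bar f$) is entire and vanishes on the interval $I$, hence vanishes identically, so $g\equiv zf$ or $g\equiv z\bar f$ on $\R$. Combined with Theorem~\ref{cpr.sis.thm1}, this proves the theorem.

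The only genuine obstacle is bookkeeping rather than depth: one must verify that the phase derivatives $\theta'$ and $\phi'$ are truly real-analytic on each component interval (so that the product relation above forces one factor to vanish identically), handle the matching of vanishing orders of $f$ and $g$ at the discrete zeros, and check that the normalization $\beta=1$ is harmless. A more computational alternative, in the spirit of Lemma~\ref{f.abs.lem}, is also available: $|f|^2=|g|^2$ forces $\tilde c\ast\bar{\tilde c}=\tilde d\ast\bar{\tilde d}$, and, via \eqref{f'.def.1}, $|f'|^2=|g'|^2$ then forces $|\omega|^2$ to agree, i.e.\ $\{k\tilde c_k\}$ and $\{k\tilde d_k\}$ have equal conjugate-convolutions; Fourier-transforming these sequences into entire functions $F,G$ turns the two identities into $FF^{\ast}=GG^{\ast}$ and $F'(F^{\ast})'=G'(G^{\ast})'$ with $F^{\ast}(z)=\overline{F(-\bar z)}$, after which a Hadamard-factorization argument yields $G\in\{zF\}\cup\{zF^{\ast}\}$. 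I would present the analytic argument above as the main line, since it is shorter and avoids the zero-counting.
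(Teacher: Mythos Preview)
Your argument is correct and takes a genuinely different route from the paper. After invoking Theorem~\ref{cpr.sis.thm1}, the paper passes to the Fourier series $C_i(z)=\sum_k c_{i,k}e^{-\lambda k^2}e^{2\pi i kz}$ of the Gaussian-weighted coefficient sequences, observes via Lemma~\ref{extension.lem} that these are entire of order~$2$, translates $|f|=|g|$ and $|f'|=|g'|$ into $C_1C_1^\natural=C_2C_2^\natural$ and $C_1'(C_1')^\natural=C_2'(C_2')^\natural$, and then applies the Hadamard-factorization Lemma~\ref{pr.entire.lem} to conclude $C_1\in\{\alpha C_2,\alpha C_2^\natural\}$. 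You instead exploit the entire extension of $f$ and $g$ in the \emph{spatial} variable, run a polar decomposition on a single zero-free interval to get $\theta'(\theta'+2\phi')=0$, use real-analytic rigidity to force one factor to vanish on that interval, and finish by analytic continuation of the entire functions $g-zf$ or $g-z\overline{F(\bar\cdot)}$. Your argument is shorter and avoids Hadamard factorization entirely; moreover, the implication ${\mathcal M}_f=\{zf,z\bar f:z\in\T\}$ that you establish does not even use the hypothesis $\{kc_k\}\in\ell^\infty$, which only enters through Theorem~\ref{cpr.sis.thm1}. The paper's approach, on the other hand, isolates the coefficient sequences directly and is the natural precursor to the explicit finite-coefficient reconstruction in Section~\ref{spec.sec}. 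You even sketch the paper's method as your ``computational alternative'' at the end, so you have identified both routes; the main line you present is the cleaner of the two for this particular statement.
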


Combined with Theorem~\ref{cpr.sis.thm1}, the proof of Theorem~\ref{main.thm} reduces to showing that the coefficient sequence $c=\{c_k\}_{k\in\Z}$ can be determined, up to a unimodular constant and conjugation, from $|f|^2$ and $|f'|^2$.

\smallskip 
In \cite{grochenig2020}, it is shown that the Fourier series of a sequence with Gaussian decay can be extended to an entire function of order $2$. For convenience, we recall the result below.

\begin{lem}\label{extension.lem}{\rm [Lemma 2, \cite{grochenig2020}]}
{\rm For a sequence $\{{\tilde c}_k=c_ke^{-\lambda k^2}\}_{k\in \Z}$ with some $c\in \ell^\infty(\Z)$ and $\gamma>0$,  the Fourier series $\hat {\tilde c}(\xi):=\sum_{k\in \Z}{\tilde c}_k e^{2\pi ik\xi}$ can be extended to an entire function $\tilde C(z) =\hat {\tilde c}(\xi+iy)$ of order 2, that is, $|\tilde C(\xi+iy)|=O(e^{\pi^2y^2/\lambda})$.}
\end{lem}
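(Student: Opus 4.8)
The plan is to promote the real variable $\xi$ in the Fourier series to a complex variable $z=\xi+iy$ and to read off both the analyticity and the growth directly from the Gaussian factor $e^{-\lambda k^2}$ (I write $\lambda$ for the decay parameter of the sequence, taking $\gamma=\lambda$, so that the target exponent is $e^{\pi^2 y^2/\lambda}$). First I would set $\tilde C(z):=\sum_{k\in\Z}c_ke^{-\lambda k^2}e^{2\pi i kz}$. Each summand is entire in $z$, and on any horizontal strip $|y|\le R$ the $k$-th term is bounded in modulus by $\|c\|_\infty e^{-\lambda k^2+2\pi|k|R}$; since the Gaussian exponent $-\lambda k^2$ overwhelms the linear exponent $2\pi|k|R$, the series converges absolutely and uniformly on compact subsets of $\C$. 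By the Weierstrass theorem the limit $\tilde C$ is entire and agrees with $\hat{\tilde c}(\xi)$ on the real axis, which gives the claimed extension.

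Next I would extract the growth rate. Using $|e^{2\pi ikz}|=e^{-2\pi k y}$ I bound
\[
|\tilde C(\xi+iy)|\le \|c\|_\infty\sum_{k\in\Z}e^{-\lambda k^2-2\pi k y}.
\]
Completing the square in the exponent, $-\lambda k^2-2\pi ky=-\lambda\bigl(k+\tfrac{\pi y}{\lambda}\bigr)^2+\tfrac{\pi^2y^2}{\lambda}$, factors out the target exponential and leaves a shifted Gaussian theta sum,
\[
|\tilde C(\xi+iy)|\le \|c\|_\infty\, e^{\pi^2y^2/\lambda}\sum_{k\in\Z}e^{-\lambda(k+\pi y/\lambda)^2}.
\]
The remaining sum $\theta(a):=\sum_{k\in\Z}e^{-\lambda(k+a)^2}$ is $1$-periodic in $a$ and continuous, hence bounded by a constant $C_\lambda$ depending only on $\lambda$. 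This yields $|\tilde C(\xi+iy)|\le C_\lambda\|c\|_\infty e^{\pi^2y^2/\lambda}=O(e^{\pi^2y^2/\lambda})$, uniformly in $\xi$, which is exactly the asserted bound.

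Finally I would interpret this as the order statement. On the circle $|z|=r$ one has $|y|\le r$, so the estimate above gives $M(r):=\max_{|z|=r}|\tilde C(z)|\le C_\lambda\|c\|_\infty e^{\pi^2 r^2/\lambda}$; hence $\log\log M(r)\le 2\log r+O(1)$, so that $\limsup_{r\to\infty}\log\log M(r)/\log r\le 2$, i.e. $\tilde C$ is an entire function of order at most $2$. Since the statement reads ``order $2$, that is, $|\tilde C(\xi+iy)|=O(e^{\pi^2y^2/\lambda})$,'' the displayed growth bound is precisely the content being asserted, and I would present it as such rather than trying to pin the order to be exactly $2$ (which already fails for a single-term sequence, whose extension is a pure exponential of order $1$).

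The only nontrivial point is the uniform boundedness of the theta sum $\theta(a)$ in the shift $a=\pi y/\lambda$; everything else is dominated convergence and completing the square. This boundedness is elementary --- it follows from $1$-periodicity together with continuity on $[0,1]$, or alternatively by comparing $\theta(a)$ with the convergent Gaussian integral $\int_\R e^{-\lambda t^2}\,dt$ --- but it is the step that turns the pointwise exponential estimate into a uniform $O(e^{\pi^2 y^2/\lambda})$ bound independent of the horizontal coordinate $\xi$, and it is where the summation over all of $\Z$ (both signs of $k$) is genuinely used.
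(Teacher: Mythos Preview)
Your argument is correct and is essentially the standard proof of this fact: uniform convergence on compacta via the Gaussian decay gives entirety, and completing the square in the exponent followed by the uniform bound on the shifted theta sum yields the growth estimate $O(e^{\pi^2 y^2/\lambda})$. Your remark that the order need not be \emph{exactly} $2$ is also well taken; the paper uses only the upper bound, so ``order at most $2$'' is what is actually needed downstream.

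There is nothing to compare against in the paper itself: the lemma is quoted from \cite{grochenig2020} without proof. What you have written is precisely the argument one finds in that reference, so your proposal fills in the citation faithfully.
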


Recall that an entire function $f$ is said to be of finite order $\rho$ if 
\begin{equation*}
\rho:=\lim_{r \to +\infty}\sup\frac{\log\log M_{f}(r)}{\log r}<\infty,
\end{equation*}
where $r>0$ and $M_{f}(r)=\max_{|z|=r}|f(z)|$.

Let $f$ be an entire function of finite order, and denote by $Z_f$ the multiset
of its nonzero zeros, counted with multiplicities. Define the involution of $f$
by
\[
f^\natural(z) := \overline{f(-\bar z)},
\]
and the associated involution multiset by
\[
Z_f^\natural := -\overline{Z_f}.
\]
Then $(f^\natural)^\natural = f$. Moreover, the function $f^\natural$ is also
entire and has the same order as $f$, and its zero multiset satisfies
\[
Z_{f^\natural} = Z_f^\natural.
\]

The following lemma plays a crucial role  in our proof of Theorem \ref{main.thm}.


\begin{lem}\label{pr.entire.lem}
{\rm Let $f,g $ be entire functions of  finite order  and satisfy
 \begin{equation}\label{f.lemma.eq}
 ff^\natural=gg^\natural{\rm \  and\ }f'(f')^\natural=g'(g')^\natural.\end{equation} Then we have
$f=\alpha g$ or $f=\alpha g^\natural$, where $\alpha \in \T$ is some unimodular constant.}
\end{lem}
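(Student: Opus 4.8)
The plan is to pass to logarithmic derivatives, where the involution $\natural$ acts transparently, and then invoke that the meromorphic functions on $\C$ form an integral domain; we may assume $f,g\not\equiv0$. First I would record the properties of $\natural$ the argument uses: it is multiplicative, $(h_1h_2)^\natural=h_1^\natural h_2^\natural$, involutive, $(h^\natural)^\natural=h$, and, directly from $h^\natural(z)=\overline{h(-\bar z)}$, satisfies $(h^\natural)'=-(h')^\natural$, whence $(\log h^\natural)'=-(h'/h)^\natural$. Set $u:=f/g$ (meromorphic, since $g\not\equiv0$) and write $L_h:=h'/h$. The hypothesis $ff^\natural=gg^\natural$ says exactly $uu^\natural\equiv1$; taking its logarithmic derivative and using the displayed identity gives $L_u-(L_u)^\natural\equiv0$, i.e. $L_u$ is $\natural$-invariant.

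Next, combining the two hypotheses gives $\frac{f'(f')^\natural}{ff^\natural}=\frac{g'(g')^\natural}{gg^\natural}$, and rewriting each side via $(h')^\natural/h^\natural=(h'/h)^\natural$ turns it into $L_f(L_f)^\natural=L_g(L_g)^\natural$. Since $f=ug$ gives $L_f=L_u+L_g$, expanding and cancelling the common term $L_g(L_g)^\natural$ leaves $L_u(L_u)^\natural+L_u(L_g)^\natural+L_g(L_u)^\natural=0$; substituting $(L_u)^\natural=L_u$, the left side factors as $L_u\bigl(L_u+L_g+(L_g)^\natural\bigr)$. As meromorphic functions form an integral domain, either $L_u\equiv0$ or $L_u+L_g+(L_g)^\natural\equiv0$.

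If $L_u\equiv0$ then $u$ is a nonzero constant $\alpha$, so $f=\alpha g$, and substituting into $ff^\natural=gg^\natural$ forces $|\alpha|=1$. If instead $L_u+L_g+(L_g)^\natural\equiv0$, then using $(L_g)^\natural=-(\log g^\natural)'$ I would rewrite $L_u=-L_g-(L_g)^\natural$ as $L_u=-(\log g)'+(\log g^\natural)'=\bigl(\log(g^\natural/g)\bigr)'$; hence $u$ and $g^\natural/g$ have equal logarithmic derivatives on the connected set $\C$ minus the discrete zero/pole set, so they differ by a multiplicative constant, giving $f=\alpha g^\natural$, and $ff^\natural=gg^\natural$ together with $(g^\natural)^\natural=g$ again forces $|\alpha|=1$. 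Either way $f=\alpha g$ or $f=\alpha g^\natural$ with $|\alpha|=1$, as required.

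The computations are short, so genuine care is needed only in (i) the bookkeeping of the involution identities — especially the sign in $(h^\natural)'=-(h')^\natural$ and the consequent $(\log h^\natural)'=-(h'/h)^\natural$ — and (ii) the routine check that the logarithmic-derivative manipulations are valid off the discrete set of zeros and poles, where ``equal logarithmic derivatives imply constant ratio'' applies. I would also note that the finite-order hypothesis is not actually used along this route; it only guarantees $f^\natural$ and $(f')^\natural$ are again entire of the same order as $f$. It would, by contrast, be essential for the alternative proof via Hadamard factorization — matching the exponential factors and then showing combinatorially that the derivative identity rules out any ``partial'' redistribution of the zero multiset of $f$ between each pair $a$ and $-\bar a$ — which is considerably more laborious, and I would avoid it.
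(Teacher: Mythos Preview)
Your proof is correct and takes a genuinely different route from the paper's. The paper invokes the Hadamard factorization theorem: from $ff^\natural=gg^\natural$ it matches the order of vanishing at the origin, the polynomial exponents (modulo $\natural$), and the zero multisets $Z_f\dot\cup Z_f^\natural=Z_g\dot\cup Z_g^\natural$, and then builds \emph{entire} functions $u,v$ with $f=uv$ and $g=uv^\natural$ by splitting off the common zeros; expanding the derivative hypothesis then gives a product identity forcing $u=\alpha u^\natural$ or $v=\alpha v^\natural$, and either case yields the conclusion. Your argument bypasses the factorization entirely by passing to the meromorphic quotient $u=f/g$ and its logarithmic derivative, reducing everything to the single factored identity $L_u\bigl(L_u+L_g+(L_g)^\natural\bigr)=0$ in the field of meromorphic functions. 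This is both shorter and, as you observe, strictly more general: finite order is never used, whereas the paper needs it for Hadamard. The trade-off is that the paper's decomposition $f=uv$, $g=uv^\natural$ is explicit and entire, which can be informative (it identifies the ``common'' and ``flipped'' parts of the zero sets), while your route stays at the level of logarithmic derivatives and the integral-domain property. Your flagged care points are exactly the right ones: the sign in $(h^\natural)'=-(h')^\natural$ is what makes $L_{u^\natural}=-(L_u)^\natural$ and hence $L_u=(L_u)^\natural$ from $uu^\natural\equiv1$, and the passage from ``equal logarithmic derivatives'' to ``constant ratio'' is standard once one notes that a meromorphic function on $\C$ with identically zero logarithmic derivative can have neither zeros nor poles.
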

 The above result has been established in \cite[Theorem 1]{Mcdonald} for entire functions where the involution function of the function $f$ is defined by $f_\ast(z)=\overline{f(\bar z)}$.  Our proof is similar to the proof there. For completeness, a proof of Lemma~\ref{pr.entire.lem} is provided in Appendix~\ref{appendix.sec}.

 \vskip0.06in

 Now we are ready to prove the main theorem. 

  \vskip0.06in

 \begin{proof}[ Proof of Theorem \ref{main.thm}:]  
To simplify notation, we set $\beta = 1$. Let
\[
f(x)=\sum_{k\in \mathbb{Z}} c_{1,k}\, e^{-\lambda(x-k)^2} \in V^\infty_{1, \lambda}
\ \ \text{and} \ \ 
g(x)=\sum_{k\in \mathbb{Z}} c_{2,k}\, e^{-\lambda(x-k)^2} \in V^\infty_{1, \lambda}, 
\]
with coefficient sequences
\[
c_1=\{c_{1,k}\}_{k\in \mathbb{Z}}
\ \  \text{and} \ \ 
c_2=\{c_{2,k}\}_{k\in \mathbb{Z}}
\]
satisfying \eqref{signal.assump}. Assume that
\[
|f(\gamma)| = |g(\gamma)| \ \ \text{and} \ \ 
|f'(\gamma)| = |g'(\gamma)|, \ \gamma \in \Gamma,
\]
where $\Gamma$ is a sampling set with lower Beurling density $D^{-}(\Gamma) > 2$.

Define
\[
\omega_1(x)=\sum_{k\in \mathbb{Z}} d_{1,k}\, e^{-\lambda(x-k)^2}
\ \text{and} \
\omega_2(x)=\sum_{k\in \mathbb{Z}} d_{2,k}\, e^{-\lambda(x-k)^2},
\]
where the coefficients are given by
\[
d_{l,k} = kc_{l,k}, \ k\in \mathbb{Z}, \; l\in \{1,2\}.
\]

Denote
\[
\tilde{c}_{l,k} := c_{l,k} e^{-\lambda k^{2}}
\quad \text{and} \quad
\tilde{d}_{l,k} := d_{l,k} e^{-\lambda k^{2}},
\qquad l\in\{1,2\}, \; k\in\mathbb{Z}.
\]
Set
\begin{equation}\label{coeff.eq.1}
r_{l,k}
:= (\tilde c_l \ast \overline{\tilde c_l})(k)\, e^{\lambda k^{2}/2},
\end{equation}
and
\begin{equation}\label{coeff.eq.2}
s_{l,k}
:= (\tilde d_l \ast \overline{\tilde d_l})(k)\, e^{\lambda k^{2}/2},
\end{equation}
for $l\in\{1,2\}$ and $k\in\mathbb{Z}$.

Following  the same argument as in the proof of Theorem~\ref{cpr.sis.thm1}, the phaseless
samples $\{|f(\gamma)|,\ |f'(\gamma)|: \gamma \in \Gamma\}$ uniquely determine
$\{r_{1,k}\}_{k\in\mathbb{Z}}$ and $\{s_{1,k}\}_{k\in\mathbb{Z}}$, and the samples
$\{|g(\gamma)|, \ |g'(\gamma)|: \gamma \in \Gamma\}$ uniquely determine
$\{r_{2,k}\}_{k\in\mathbb{Z}}$ and $\{s_{2,k}\}_{k\in\mathbb{Z}}$.

Since $|f(\gamma)| = |g(\gamma)|$ and $|f'(\gamma)| = |g'(\gamma)|$ for all
$\gamma \in \Gamma$, it follows from Theorem~\ref{cpr.sis.thm1} that
\[
r_{1,k} = r_{2,k}
\quad \text{and} \quad
s_{1,k} = s_{2,k},
\qquad \text{for all } k \in \mathbb{Z}.
\]

Define
\[
R_l(z) := \sum_{k\in\mathbb{Z}} r_{l,k}\, e^{-\lambda k^{2}/2} e^{2\pi i k z}
\quad \text{and} \quad
S_l(z) := \sum_{k\in\mathbb{Z}} s_{l,k}\, e^{-\lambda k^{2}/2} e^{2\pi i k z},
\qquad l\in\{1,2\}.
\]
Then it follows immediately that
\begin{equation}\label{coeff.eq.3}
R_1(z) = R_2(z)
\quad \text{and} \quad
S_1(z) = S_2(z),
\qquad z \in \mathbb{C}.
\end{equation}

   Next we show that   the coefficient sequences $c_1$ and  $c_2$ can be determined, up to a unimodular constant and conjugation, from the sequences $\{r_k\}_{k\in\Z}$ and $\{s_k\}_{k \in\Z}$.

From a direct calculation, we have
\begin{equation*}
R_l(z)=C_l(z)C_l^\natural(z) \ {\rm  \ and \ } \ S_l(z)=D_l(z)D_l^\natural(z)=-\frac{1}{4\pi^2}C_l'(z)\big(C_l'(z)\big)^\natural,  \ \ l\in \{1, 2\}, 
\end{equation*}
where \begin{equation*}
C_l(z)=\sum_{k\in\Z} \tilde c_{l, k}e^{2\pi ikz} \ {\rm \ and \ } \ D_l(z)=\sum_{k\in\Z} \tilde d_{l, k}e^{2\pi ikz}.
\end{equation*}
From \eqref{coeff.eq.3}, we have
\begin{equation}\label{coeff.equal.eq}
C_1(z)C_1^\natural(z)=C_2(z)C_2^\natural(z) \ {\rm  \ and \ } \ C_1'(z)\big(C_1'(z)\big)^\natural=C_2'(z)\big(C_2'(z)\big)^\natural. 
\end{equation}
   By \eqref{signal.assump} and Lemma \ref{extension.lem}, the functions $C_l(z)$, $D_l(z)$ and  their involution functions $C_l^\natural(z), D_l^\natural(z), l=1,2$ 
  are entire functions of order 2. 
Together with \eqref{coeff.equal.eq},  Lemma \ref{pr.entire.lem} implies that  $C_1=\alpha C_2$ or $C_1=\alpha C_2^\natural$ for some $\alpha \in \T$.  By the linear independence of $\{e^{2\pi i  kz}\}_{k\in \Z}$,  the sequence $\{c_k\}_{k\in \Z}$ or $\{\bar{c}_k\}_{k\in \Z}$ can be determined, up to a unimodular constant, which completes our proof.
\end{proof}

\subsection{Conjugate phase retrieval in shift-invariant spaces generated by a Hermite function}

In this subsection, we study the conjugate phase retrieval in  shift-invariant spaces  generated by a Hermite function. We begin by recalling the definition of the Hermite functions and some basic
properties that will be used throughout this subsection \cite{abreu2014, cohenbook, szego75}.

\begin{definition}{\rm (Hermite functions)}
{\rm  For $n\in\mathbb N$, the $n$-th Hermite function is defined by
\begin{equation}\label{hermite.def}
h_n(x)=\zeta_n H_n(x)e^{-x^2/2},
\end{equation}
where $\zeta_n$ is a normalization constant and
\[
H_n(x)=(-1)^n e^{x^2}\frac{{\rm d}^n}{{\rm d}x^n}\big(e^{-x^2}\big)
\]
is the Hermite polynomial of order $n$. In particular, $h_0(x)$ coincides with the
Gaussian function up to normalization.}
\end{definition}

The Hermite polynomials satisfy the identity
\[
H_n'(x)=2nH_{n-1}(x),
\]
and $\deg H_n=n$. Moreover, for each $n\ge 0$, the family
$\{H_0,\ldots,H_n\}$ forms a basis of the space $\mathcal P_n$ of real
polynomials of degree at most $n$.

\medskip

For a fixed $n\ge 0$, we consider the shift-invariant space generated by $h_n$,
defined as
\begin{equation}\label{h.sis}
V(h_n)
:=
\left\{
\sum_{k\in\mathbb Z} c_k h_n(\cdot-k): \;
\{c_k\}_{k\in\mathbb Z}\in\ell^\infty(\mathbb Z)
\right\}.
\end{equation}

A key observation is that any function in $V(h_n)$ admits a representation in
terms of derivatives of Gaussian shifts. Indeed, since
$\{H_l(x/\sqrt2)\}_{l=0}^n$ forms a basis of $\mathcal P_n$, there exist constants
$m_l$, $0\le l\le n$, such that
\begin{equation}\label{hermite.expansion}
H_n(x)=\sum_{l=0}^n m_l H_l(x/\sqrt2).
\end{equation}
Together with the identity
\begin{equation}\label{gaussian.derivative}
\frac{{\rm d}^l}{{\rm d}x^l}\big(e^{-(x-k)^2/2}\big)
=(-1)^l(\sqrt2)^{-l}
H_l\!\left(\frac{x-k}{\sqrt2}\right)e^{-(x-k)^2/2},
\end{equation}
this implies that for any $f\in V(h_n)$,
\begin{equation}\label{hermite.sis.expansion}
f(x)
=
\zeta_n\sum_{l=0}^{n}(-1)^l(\sqrt2)^l m_l
\sum_{k\in\mathbb Z} c_k
\frac{{\rm d}^l}{{\rm d}x^l}\big(e^{-(x-k)^2/2}\big).
\end{equation}

The above expansion shows that $f$ is a finite linear combination of derivatives
of a Gaussian shift-invariant signal.

\medskip

 In the following proposition, we show that any function in $V(h_n)$ can be extended to an entire function on the complex plane.

\begin{prop}\label{hermite.lem}
{\rm Let $f\in V(h_n)$ be a function in the shift-invariant space generated by the Hermite function $h_n$ of order $n$ in \eqref{h.sis}.  Then the function $f$  and its squared modulus function $|f|^2$ can be extended to  the entire function of order at most 2.  }
\end{prop}

\begin{proof} 
Let $f\in V(h_n)$. By the expansion \eqref{hermite.sis.expansion}, the function $f$ can be written
as a finite linear combination of derivatives of a Gaussian shift-invariant signal. 
Set
\[
F(z):=\sum_{k\in\mathbb Z} c_k e^{-(z-k)^2/2}, \qquad z=x+iy\in\mathbb C,
\]
where $\{c_k\}_{k\in\mathbb Z}\in\ell^\infty(\mathbb Z)$.

For each $k\in\mathbb Z$,
\[
|e^{-(z-k)^2/2}|
= e^{(y^2-(x-k)^2)/2}
\le e^{y^2/2} e^{-(x-k)^2/2}.
\]
Hence, for any strip $\{z\in\mathbb C:|y|\le R\}$,
\[
|F(z)|
\le \|c\|_\infty e^{R^2/2}
\sum_{k\in\mathbb Z} e^{-(x-k)^2/2}.
\]
Since the sum $\sum_{k\in\mathbb Z} e^{-(x-k)^2/2}$ is uniformly bounded in
$x\in\mathbb R$, the series defining $F$ converges absolutely and uniformly on compact subsets of $\mathbb C$. Consequently, $F$ is entire and satisfies
\[
|F(z)|\le C e^{|z|^2},
\]
so that $F$ is an entire function of order at most $2$.

Moreover, it can be infinitely differentiable. For any $l\ge 0$,
\[
F^{(l)}(z)=\sum_{k\in\mathbb Z} c_k
\frac{{\rm d}^l}{{\rm d}z^l}\big(e^{-(z-k)^2/2}\big),
\]
and  each $F^{(l)}$ is also an entire function of order at most $2$.
Combining with \eqref{hermite.sis.expansion}, we conclude that $f$ admits the analytic extension
\begin{equation}\label{f.rep}
f(z)=\zeta_n\sum_{l=0}^n (-1)^l(\sqrt2)^l m_l\,F^{(l)}(z),
\end{equation}
which is an entire function of order no more than $2$. 

\smallskip 

Define $G(z):=f(z)\,\overline{f(\overline{z})}$. Since both
$f(z)$ and $z\mapsto\overline{f(\overline{z})}$ are entire, $G$ is entire. On the real axis,
\[
G(x)=f(x)\overline{f(x)}=|f(x)|^2.
\]
Moreover, $G$ has order at most $2$ because it is a product of two entire
functions of order at most $2$. Hence $|f|^2$ admits an entire extension of order at most $2$.  This completes the proof.

\end{proof}

In the following theorem, we show that a function $f\in V(h_n)$ generated by a Hermite function is conjugate phase retrieval, and it can be determined, up to a unimodular constant and conjugation, from the phaseless Hermite samples $|f(\gamma)|$ and $|f'(\gamma)|, \gamma\in\Gamma$, where  $\Gamma\subset\R$ is a sequentially compact sampling set.

\begin{thm}\label{hermite.cpr.prop}
{\rm Let $h_n$ be the Hermite function \eqref{hermite.def} of order $n$ and $V(h_n)$  be the shift-invariant space \eqref{h.sis} generated by $h_n$. Any function $f\in V(h_n)$  
can be determined, up to a unimodular constant and conjugation, from the magnitudes $|f(\gamma)|$ and $|f'(\gamma)|$, $\gamma\in\Gamma$, taken on a sequentially compact infinite set $\Gamma\subset \R$.} 
\end{thm}

 \begin{proof}
Take an arbitrary $f\in V(h_n)$.
By Proposition~\ref{hermite.lem}, the functions $f$  and $|f|^2$ admits an extension to an
entire function of finite order on the complex plane.


Following the same argument as in Proposition \ref{hermite.lem}, differentiating
\eqref{f.rep} yields
\[
f'(z)=\zeta_n\sum_{l=0}^n (-1)^l(\sqrt2)^l m_l\,F^{(l+1)}(z),
\]
and hence $f'$ is entire of order at most $2$. Consequently, $|f'|^2$ admits an
entire extension of order at most $2$.

Therefore, both entire functions $|f|^2$ and $|f'|^2$ are uniquely determined by
their values on the sequentially compact set $\Gamma$.
Together with Lemma~\ref{pr.entire.lem}, this implies that $f$ can be uniquely
recovered, up to a unimodular constant and conjugation, from the phaseless
samples $|f(\gamma)|$ and $|f'(\gamma)|$, $\gamma\in\Gamma$.
\end{proof}

\begin{remark}  In contrast to the sampling-theoretic approach employed in Theorem \ref{main.thm}, conjugate phase retrieval in shift-invariant spaces generated by Hermite functions is established using methods from complex analysis. In this setting, the uniqueness set for conjugate phase retrieval is not discrete and is used to uniquely determine an analytic function. In particular, previous results on conjugate phaseless sampling rely on Lemma \ref{sampling.lem}, which guarantees the existence of a suitable discrete sampling set, whereas the present result avoids this requirement altogether by exploiting analytic continuation rather than discrete sampling.

In the Gaussian case, the sampling-based argument requires an additional decay assumption \eqref{signal.assump} on the coefficient sequence to ensure that the derivative of the signal remains in the  Gaussian shift-invariant space, thereby allowing the sampling theory to be applied to the derivative, cf. \cite{GRS2020}. In  the Hermite case no such decay condition is needed. Indeed, as
shown in Proposition~\ref{hermite.lem}, both the signal and its derivatives admit
analytic extensions to entire functions of order at most $2$, and the same holds
for the squared modulus of the derivatives. This property eliminates
the need to verify that the derivative belongs to the original shift-invariant space. Consequently, the conjugate phase retrieval result in
Theorem \ref{hermite.cpr.prop} is established via a fundamentally different proof strategy from that of Theorem~\ref{main.thm}.

\end{remark}

\medskip

\section{Gaussian shift-invariant functions  with finite coefficients}\label{spec.sec}

\medskip 

Given a nonzero signal
\[
f(x)=\sum_{k\in\mathbb Z} c_k\, e^{-\lambda(x-\beta k)^2}\in V^{\infty}_{\beta,\lambda},
\]
define
\begin{equation}\label{Kpm.def}
K_-(f)=\inf\{k\in\mathbb Z:\ c_k\neq 0\},
\qquad
K_+(f)=\sup\{k\in\mathbb Z:\ c_k\neq 0\},
\end{equation}
and write $K_\pm:=K_\pm(f)$ when unambiguous.   
A function $f\in V^\infty_{\beta,\lambda}$ is said to be of \emph{finite duration} if
\[
-\infty<K_-(f)\le K_+(f)<\infty,
\]
that is, only finitely many coefficients $\{c_k\}$ are nonzero.

\medskip
In this section, we study conjugate phase retrieval for signals in
$V^\infty_{\beta,\lambda}$ of finite duration.
We show that such a signal can be uniquely determined, up to a unimodular constant
and conjugation, from finitely many magnitude measurements of the signal and its
derivative.
More precisely, in Theorem \ref{finite.cpr.thm} we prove that a signal of finite
duration can be recovered from $2(K_+(f)-K_-(f))+1$ phaseless Hermite samples.

\medskip

Moreover, the proof of Theorem \ref{finite.cpr.thm} is constructive and naturally
leads to an explicit algorithm for conjugate phaseless sampling and reconstruction
of Gaussian shift-invariant signals with finite duration.

\begin{thm}\label{finite.cpr.thm}
{\rm Let
\[
f(x)=\sum_{k=K_-(f)}^{K_+(f)} c_k\, e^{-\lambda(x-\beta k)^2}
\in V^\infty_{\beta,\lambda}
\]
be a signal satisfying \eqref{signal.assump}, where
$-\infty< K_-(f)\le K_+(f)<\infty$.
Let $\Gamma\subset\mathbb R$ be a sampling set consisting of
$ 2\bigl(K_+(f)-K_-(f)\bigr)+1$  
distinct points. Then the signal $f$ can be uniquely determined, up to a unimodular
constant and conjugation, from the phaseless samples
\[
\bigl\{|f(\gamma)|,\ |f'(\gamma)| : \gamma\in\Gamma\bigr\}.
\]}
\end{thm}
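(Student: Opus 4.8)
The plan is to run the argument of Theorem~\ref{main.thm}, with the positive–density sampling inequality of Lemma~\ref{sampling.lem} replaced by ordinary polynomial interpolation, which is precisely the ingredient that lets a \emph{finite} node set suffice. As in the earlier proofs we may assume $\beta=1$, using $V^\infty_{\beta,\lambda}=V^\infty_{1,\beta^2\lambda}$. Since $f$ has finite duration contained in $[K_-,K_+]$, Lemma~\ref{f.abs.lem} gives
\begin{equation*}
|f(x)|^2=\sum_{m=2K_-}^{2K_+}r_me^{-2\lambda(x-m/2)^2},
\end{equation*}
a sum of at most $2(K_+-K_-)+1$ shifted Gaussians whose nodes lie in the set $\{m/2:2K_-\le m\le 2K_+\}$, known a priori from $K_\pm$. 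Such a finite Gaussian sum is determined by its values at $2(K_+-K_-)+1$ distinct points: writing $e^{-2\lambda(x-m/2)^2}=e^{-2\lambda x^2}e^{-\lambda m^2/2}(e^{2\lambda x})^m$ gives $|f(x)|^2=e^{-2\lambda x^2}(e^{2\lambda x})^{2K_-}P(e^{2\lambda x})$ with $P$ a polynomial of degree at most $2(K_+-K_-)$; since $x\mapsto e^{2\lambda x}$ is injective on $\R$, the samples $|f(\gamma)|$, $\gamma\in\Gamma$, furnish the values of $P$ at $2(K_+-K_-)+1$ distinct points, hence determine $P$, hence all the $r_m$, hence the function $|f|^2$.

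Next I would recover $|\omega|^2$, where $\omega(x)=\sum_{k}kc_ke^{-\lambda(x-k)^2}$, exactly as in the proof of Theorem~\ref{cpr.sis.thm1}: identity~\eqref{f'.def.1} expresses $|\omega(\gamma)|^2$ through $|f'(\gamma)|^2$, $|f(\gamma)|^2$ and the coefficients $r_n$ just found, so $|\omega(\gamma)|^2$ is known for every $\gamma\in\Gamma$. The coefficient sequence $(kc_k)_k$ is again supported in $[K_-,K_+]$, so by Lemma~\ref{f.abs.lem} we have $|\omega|^2=\sum_{m=2K_-}^{2K_+}s_me^{-2\lambda(x-m/2)^2}$, a finite Gaussian sum with at most $2(K_+-K_-)+1$ terms on the same node set, and the interpolation step above recovers all $s_m$ and the function $|\omega|^2$.

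Thus the finitely many numbers $r=(r_m)$ and $s=(s_m)$ — equivalently the trigonometric polynomials $R(z)=\sum_m r_me^{-\lambda m^2/2}e^{2\pi i mz}$ and $S(z)=\sum_m s_me^{-\lambda m^2/2}e^{2\pi i mz}$ — are determined by the phaseless Hermite samples on $\Gamma$. Put $\tilde c_k=c_ke^{-\lambda k^2}$ and $C(z)=\sum_{k=K_-}^{K_+}\tilde c_ke^{2\pi i kz}$; this is a trigonometric polynomial, in particular an entire function of finite order, and a direct computation as in~\eqref{c.require.1} gives $R=CC^\natural$ and $S=-\tfrac1{4\pi^2}C'(C')^\natural$. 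If $f$ and $g$ are two finite-duration signals with duration contained in $[K_-,K_+]$ and with the same phaseless Hermite samples on $\Gamma$, then for the associated trigonometric polynomials $C_1,C_2$ we obtain $C_1C_1^\natural=C_2C_2^\natural$ and $C_1'(C_1')^\natural=C_2'(C_2')^\natural$; Lemma~\ref{pr.entire.lem} forces $C_1=\alpha C_2$ or $C_1=\alpha C_2^\natural$ for some unimodular $\alpha$, and linear independence of $\{e^{2\pi i kz}\}_k$ then yields $c_{1,k}=\alpha c_{2,k}$ for all $k$, or $c_{1,k}=\alpha\overline{c_{2,k}}$ for all $k$, i.e. $f=\alpha g$ or $f=\alpha\bar g$. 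Reading off the $r_m$, then the $s_m$, then $C$ up to the two ambiguities, then the $c_k$, is exactly the announced reconstruction algorithm.

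I expect the only delicate points to be bookkeeping rather than a genuine obstacle: one must check that the expansions of $|f|^2$ and $|\omega|^2$ carry at most $2(K_+-K_-)+1$ unknown coefficients on nodes fixed in advance, so that the prescribed sampling cardinality is precisely what the interpolation needs (the endpoint coefficient $r_{2K_+}=|\tilde c_{K_+}|^2e^{2\lambda K_+^2}\neq0$, and likewise at $2K_-$, shows the count cannot be reduced), and one must observe that a trigonometric polynomial is entire of finite order so that Lemma~\ref{pr.entire.lem} is applicable. The substantive fact — that the pair of factorizations $CC^\natural$ and $C'(C')^\natural$ pins $C$ down up to a unimodular constant and the involution $\natural$ — is already packaged into Lemma~\ref{pr.entire.lem}, so the finite-duration case requires nothing essentially new beyond Theorem~\ref{main.thm}.
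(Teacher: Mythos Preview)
Your argument is correct, and its first two steps---recovering the coefficients of $|f|^2$ and of $|\omega|^2$ by the substitution $t=e^{2\lambda x}$ and polynomial interpolation at $2(K_+-K_-)+1$ distinct nodes---match the paper's Steps~1 and~2 essentially verbatim. The divergence is in Step~3: you invoke Lemma~\ref{pr.entire.lem} on the trigonometric polynomial $C(z)=\sum_k\tilde c_ke^{2\pi ikz}$, reusing the machinery of Theorem~\ref{main.thm}, whereas the paper carries out an explicit inductive reconstruction of the $\tilde c_k$ from the numbers $A_m,B_m$ (your $r_m,s_m$ up to a Gaussian weight), writing down closed formulas for $\Re\tilde c_k$ and $\Im\tilde c_k$ one index at a time and branching on which coefficient is the first with nonzero imaginary part. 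Your route is shorter and makes transparent that the finite-duration theorem is simply Theorem~\ref{main.thm} with the Beurling-density sampling hypothesis replaced by finite interpolation; the paper's route is longer but fully constructive, and that recursion is precisely what drives the reconstruction algorithm displayed immediately after the proof. Accordingly, your closing sentence---that reading off $r_m$, $s_m$, and then $C$ ``is exactly the announced reconstruction algorithm''---overstates things: your proof establishes uniqueness, but the explicit recursive formulas of the algorithm come from the paper's hands-on Step~3, not from an appeal to Lemma~\ref{pr.entire.lem}.
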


\medskip


\begin{proof}
\noindent\textbf{Step 1:  Reconstruct the modulus function $|f|$.}
\smallskip

 Set
 $\tilde c_k=c_ke^{-\lambda\beta^2k^2}, k\in\Z$. 
 By a direct calculation, we have 
 \begin{eqnarray}\label{f.abs.eq.1}
 |f(x)|^2 \hskip-0.05in&\hskip-0.1in=&\hskip-0.1in e^{-2\lambda x^2}\sum_{k=K_-}^{K_+}\sum_{j=K_-}^{K_+}{\tilde c}_k\bar{\tilde c}_je^{2\lambda\beta(k+j)x}
 =:e^{-2\lambda x^2}\sum_{m=2K_-}^{2K_+}A_me^{2\lambda\beta mx}
 \end{eqnarray}
 with the real coefficients
\begin{equation}\label{A.def.1}
A_m=\left\{
\begin{array}{ll}
\displaystyle \sum_{j=K_-}^{m-K_-}\tilde c_{m-j}\overline{\tilde c_j},
& 2K_-\le m\le K_-+K_+,\\[2mm]
\displaystyle \sum_{j=m-K_+}^{K_+}\tilde c_{m-j}\overline{\tilde c_j},
& K_-+K_+ +1\le m\le 2K_+.
\end{array}
\right.
\end{equation}
Hence, for each $\gamma\in\Gamma$,
\[
e^{2\lambda \gamma^2}|f(\gamma)|^2=\sum_{m=2K_-}^{2K_+}A_m e^{2\lambda\beta m\gamma}.
\]
Since $\Gamma$ consists of $2(K_+-K_-)+1$ distinct points, the Vandermonde matrix
\[
V:=\Big(e^{2\lambda\beta m\gamma}\Big)_{2K_-\le m\le 2K_+, \gamma\in\Gamma}
\]
is invertible. Therefore the coefficients $A_m$ for $2K_-\le m\le 2K_+$, and hence the function
$|f|^2$, are  uniquely determined from the values $\{e^{2\lambda\gamma^2}|f(\gamma)|^2:\gamma\in\Gamma\}$.

\smallskip
\medskip
\noindent\textbf{Step 2: Reconstruct the modulus function 
 $|f'|$.}
\smallskip

Define
\[
d(x):=\sum_{k=K_-}^{K_+} k c_k e^{-\lambda(x-\beta k)^2}.
\]
By \eqref{signal.assump}, we have $d\in V^\infty_{\beta,\lambda}$. A direct calculation yields
\begin{equation}\label{d.def}
|d(x)|^2=e^{-2\lambda x^2}\sum_{m=2K_-}^{2K_+}B_m e^{2\lambda\beta m x},
\end{equation}
where the real coefficients $B_m$ are
\begin{equation}\label{B.def.1}
B_m=\left\{
\begin{array}{ll}
\displaystyle \sum_{j=K_-}^{m-K_-}(m-j)j\,\tilde c_{m-j}\overline{\tilde c_j},
& 2K_-\le m\le K_-+K_+,\\[2mm]
\displaystyle \sum_{j=m-K_+}^{K_+}(m-j)j\,\tilde c_{m-j}\overline{\tilde c_j},
& K_-+K_+ +1\le m\le 2K_+.
\end{array}
\right.
\end{equation}

Since
\[
f'(x)=\sum_{k=K_-}^{K_+} c_k\cdot (-2\lambda)(x-\beta k)e^{-\lambda(x-\beta k)^2}
= -2\lambda x f(x)+2\lambda\beta\, d(x),
\]
we have
\begin{eqnarray}\label{f'.abs.def.1}
\frac{|f'(x)|^2}{4\lambda^2}
&=&
x^2|f(x)|^2+\beta^2|d(x)|^2-\beta x\,e^{-2\lambda x^2}\sum_{m=2K_-}^{2K_+}mA_m e^{2\lambda\beta m x}.
\end{eqnarray}
Therefore the values $\{|d(\gamma)|^2:\gamma\in\Gamma\}$ can be computed from phaseless Hermite samples and the already determined $A_m$ by
\begin{equation}\label{d.samples}
\beta^2|d(\gamma)|^2
=
\frac{|f'(\gamma)|^2}{4\lambda^2}-\gamma^2|f(\gamma)|^2
+\beta \gamma e^{-2\lambda\gamma^2}\sum_{m=2K_-}^{2K_+}mA_m e^{2\lambda\beta m\gamma},
\gamma\in\Gamma.
\end{equation}

By \eqref{d.def} and  the invertibility of the matrix $\begin{pmatrix}e^{2\lambda\beta m\gamma}\end{pmatrix}_{2K_-\le m\le 2K_+, \gamma \in \Gamma}$, the coefficients $B_m, 2K_-\le m\le 2K_+$, and hence the modulus function $|d|$ can be determined from  $|d(\gamma)|, \gamma\in\Gamma$. 
This together with \eqref{f'.abs.def.1} determines the modulus function $|f'|$.

\smallskip
\medskip
\noindent\textbf{Step 3: Reconstruct the coefficients $c_k$, $K_-\le k\le K_+$.}
\smallskip

We claim that the coefficients $\tilde c_k$, $K_-\le k\le K_+$, can be obtained from $\{A_m\}_{m=2K_-}^{2K_+}$ and $ \{B_m\}_{m=2K_-}^{2K_+}$ by induction.

Since $A_{2K_-}=|\tilde c_{K_-}|^2\neq 0$. Without loss of generality, we assume
\begin{equation}\label{c_0.def.1}
\tilde c_{K_-}=\sqrt{A_{2K_-}}\in \R^+,
\end{equation}
otherwise replace $f$ by $\alpha f$ with some unimodular constant $\alpha\in\T$.

Moreover, take $m=2K_-+1$ in \eqref{A.def.1}, then 
\begin{equation}\label{S3:ReK-1}
\Re\tilde c_{K_-+1}=\frac{A_{2K_-+1}}{2\tilde c_{K_-}}.
\end{equation}

\medskip
\noindent\textbf{Case (a): all coefficients up to $k-1$ are real.}

Fix $k$ with $K_-<k\le \lfloor\frac{K_-+K_+}{2}\rfloor$ and assume
\[
\tilde c_{K_-+1},\ldots,\tilde c_{k-1}\in\R.
\]
Take $m=K_-+k$ in \eqref{A.def.1} yields
\begin{equation}\label{S3:Rek-a}
\Re\tilde c_k=
\frac{A_{K_-+k}-\sum_{j=K_-+1}^{k-1}\tilde c_{K_-+k-j}\tilde c_j}
     {2\tilde c_{K_-}}.
\end{equation}
Moreover,  $\Re\tilde c_{l}, \ell\in \{k,k+1,\ldots,2k-K_--1\}$ can be obtained sequentially by taking  $m=K_-+\ell$. 

Next take $m=2k\le K_-+K_+$ in \eqref{A.def.1} and \eqref{B.def.1},
\[
\left\{
\begin{aligned}
A_{2k}-C_{2k}&=2\tilde c_{K_-}\Re\tilde c_{2k-K_-}+|\tilde c_k|^2,\\
B_{2k}-\widetilde C_{2k}&=2K_-(2k-K_-)\tilde c_{K_-}\Re\tilde c_{2k-K_-}+k^2|\tilde c_k|^2,
\end{aligned}
\right.
\]
where 
$C_{2k}:=2\sum_{j=K_-+1}^{k-1}\Re(\tilde c_{2k-j})\,\tilde c_j$ and 
$\widetilde C_{2k}:=2\sum_{j=K_-+1}^{k-1}j(2k-j)\Re(\tilde c_{2k-j})\,\tilde c_j.$

Then 
\begin{equation}\label{S3:absck}
|\tilde c_k|^2=
\frac{K_-(2k-K_-)(A_{2k}-C_{2k})-(B_{2k}-\widetilde C_{2k})}
     {K_-(2k-K_-)-k^2}.
\end{equation}
Consequently,
\[
|\Im\tilde c_k|^2=|\tilde c_k|^2-(\Re\tilde c_k)^2.
\]
If $|\Im\tilde c_k|^2=0$, then $\tilde c_k\in\R$ is uniquely determined; otherwise
$\tilde c_k$ is determined up to conjugation.

\medskip
\noindent\textbf{Case (b): the first nonreal coefficient occurs at index
\[
j_0:=\min\{j\in\{K_-+1,\ldots, K_+\}:\ \Im\tilde c_j\neq0\}.
\]}
Then $\tilde c_{K_-+1},\ldots,\tilde c_{j_0-1}\in\R$, and $\tilde c_{j_0}$ is already
determined up to conjugation from Case (a) applied at $k=j_0$. 

Now fix $k$ with $j_0<k\le \lfloor\frac{K_-+K_+}{2}\rfloor$ and assume that
$\tilde c_{K_-},\ldots,\tilde c_{k-1}$ have been determined.
We first reconstruct the real parts
\[
\Re\tilde c_\ell,\ \ell=k,k+1,\ldots,k+j_0-1-K_-,
\]
sequentially from $A_{K_-+\ell}$.

Take $m=K_-+\ell$ for $\ell\in \{k,k+1,\ldots,k+j_0-1-K_-\}$, then 
\begin{equation*}\label{step3:Re-seq}
A_{m}=2\sum_{j=K_-}^{j_0-1}\tilde c_{j}\Re\big(\tilde c_{m-j}\big)+\sum_{j=j_0}^{m-j_0}\tilde c_{m-j}\,\overline{\tilde c}_j,
\end{equation*}
then  $\Re\tilde c_{k},\Re\tilde c_{k+1},\ldots, \Re \tilde c_{k+j_0-1-K_-}$ are obtained
sequentially once the earlier coefficients are known.

\smallskip
Next, we determine the imaginary part of $\tilde c_k$. 

Take $m=k+j_0$ in \eqref{A.def.1} and \eqref{B.def.1}. Then
\[
\left\{
\begin{aligned}
A_m-R_m
&=2\tilde c_{K_-}\Re\tilde c_{m-K_-}
 +2\Re(\tilde c_{j_0}\overline{\tilde c}_k),\\
B_m-\widetilde R_m
&=2K_-(m-K_-)\tilde c_{K_-}\Re\tilde c_{m-K_-}
 +2j_0k\,\Re(\tilde c_k \overline{\tilde c}_{j_0}),
\end{aligned}
\right.
\]
where
\[
R_m:=2\sum_{j=K_-+1}^{j_0-1}{\tilde c}_{j}\Re\big( \tilde c_{m-j}\big)+\sum_{j=j_0+1}^{k-1}{\tilde c}_{m-j}\overline{\tilde c}_{j}\]
and 
\[\widetilde R_m
:=
2\sum_{j=K_-+1}^{j_0-1}j(m-j){\tilde c}_{j}\Re\big( \tilde c_{m-j}\big)+\sum_{j=j_0+1}^{k-1}j(m-j){\tilde c}_{m-j}\overline{\tilde c}_{j}.
\]
Eliminating $\Re\tilde c_{m-K_-}$ from the above system yields
\begin{equation}\label{S3:Reprod}
\Re(\tilde c_{j_0}\overline{\tilde c_k})
=
\frac{K_-(m-K_-)(A_m-R_m)-(B_m-\widetilde R_m)}
     {2\,[K_-(m-K_-)-j_0k]}.
\end{equation}

Consequently,
\begin{equation}\label{S3:Imck}
\Im\tilde c_k
=
\frac{\Re(\tilde c_{j_0}\overline{\tilde c_k})
-\Re(\tilde c_{j_0})\,\Re(\tilde c_k)}
{\Im(\tilde c_{j_0})}.
\end{equation}

\medskip
Proceeding for $k=K_-+1,\ldots,\lfloor\frac{K_-+K_+}{2}\rfloor$, we reconstruct the
first half of $\{\tilde c_k\}$. For $k\ge \frac{K_-+K_+}{2}$, the conclusion follows
by similar arguments.

\smallskip 

 Together with \eqref{c_0.def.1},
we have that $(\tilde c_{K_-},\ldots, \tilde c_{K_+} )$ is uniquely determined, up to a unimodular constant and conjugation, from $A_m, B_m, K_-\le m\le K_+$, and hence proves our conclusion. 
\end{proof}

\medskip

 If the signal $f\in \Vbeta$ has a real-valued coefficient sequence, that is,
\[
c_k \in \mathbb{R}, \  K_- \le k \le K_+ ,
\]
then the conjugate ambiguity present in the complex-valued setting does not arise. Indeed, a careful inspection of the proof of Theorem~\ref{finite.cpr.thm} shows that,
in the real-valued case, the quadratic quantities $A_k$, $2K_- \le k \le 2K_+$,
already determine the coefficients $\{c_k\}_{k=K_-}^{K_+}$ uniquely, up to a global sign. 
This leads to the following corollary on phase retrieval for real-valued functions in the Gaussian shift-invariant space.

\begin{corollary}[Real Case]\label{cor:real-coef-pr}
Let
\[
f(x)=\sum_{k=K_-(f)}^{K_+(f)} c_k\, e^{-\lambda(x-\beta k)^2}
\in V^\infty_{\beta,\lambda}
\]
be a signal with real coefficients $c_k\in\R$ for $K_-(f)\le k\le K_+(f)$, where
$-\infty< K_-(f)\le K_+(f)<\infty$. 
Let $\Gamma\subset\mathbb R$ be a sampling set consisting of
$2\bigl(K_+(f)-K_-(f)\bigr)+1$ distinct points. 
Then the signal $f$ can be uniquely determined, up to a global sign, from the phaseless samples
\[
\bigl\{|f(\gamma)|:\ \gamma\in\Gamma\bigr\}.
\]
\end{corollary}

\medskip 

Based on the constructive proof of Theorem \ref{finite.cpr.thm}, we propose the following reconstruction algorithm for a finite-duration signal $f(x)=\sum_{k=K_-}^{K_+} c_k e^{-(x-k)^2}\in V^{\infty}_{1,1}$.  Given the phaseless samples $
\big\{|f(\gamma_l)|,\ |f'(\gamma_l)|\big\}_{l=0}^{2(K_+-K_-)}$, 
the coefficient sequence
$\{c_k\}_{k=K_-}^{K_+}$ (and hence $f$) can be recovered as follows.


\begin{algorithm}[htp!]
\caption{Reconstruction of $\{ c_k\}_{k=K_-}^{K_+}$ from $\{A_m,B_m\}_{m=2K_-}^{2K_+}$}
\label{alg:step3-paper}
\begin{algorithmic}
\State {\bf Inputs:} Integers $K_-\le K_+$; real coefficients $\{A_m,B_m\}_{m=2K_-}^{2K_+}$.

\State \textbf{Initialization.} Set
\[
\tilde c_{K_-}=\sqrt{A_{2K_-}}\in\R^+,\qquad
\Re\tilde c_{K_-+1}=\frac{A_{2K_-+1}}{2\tilde c_{K_-}},\qquad
j_0=0.
\]

\State \textbf{Phase I (detect $j_0$ and fix the branch).}
\For{$k=K_-+1$ to $\lfloor (K_-+K_+)/2\rfloor$}

\State For $\ell=k,\ldots,2k-K_--1$, compute sequentially
\[
\Re\tilde c_\ell
=
\frac{A_{K_-+\ell}-\sum_{j=K_-+1}^{\ell-1}\tilde c_{K_-+\ell-j}\overline{\tilde c_j}}
     {2\tilde c_{K_-}}.
\]

\State Evaluate 
\[
C_{2k}=2\sum_{j=K_-+1}^{k-1}\Re(\tilde c_{2k-j})\,\tilde c_j,\qquad
\widetilde C_{2k}=2\sum_{j=K_-+1}^{k-1}j(2k-j)\Re(\tilde c_{2k-j})\,\tilde c_j,
\]
and compute
\[
|\tilde c_k|^2=
\frac{K_-(2k-K_-)(A_{2k}-C_{2k})-(B_{2k}-\widetilde C_{2k})}
     {K_-(2k-K_-)-k^2},
\qquad
|\Im\tilde c_k|^2=|\tilde c_k|^2-(\Re\tilde c_k)^2.
\]

\If{$|\Im\tilde c_k|^2>0$}
\State Set $j_0=k$ and choose the branch
\[
\Im\tilde c_{j_0}=+\sqrt{|\Im\tilde c_{j_0}|^2},\qquad
\tilde c_{j_0}=\Re\tilde c_{j_0}+i\,\Im\tilde c_{j_0}.
\]
\State \textbf{Stop}  \Comment{{\bf Output}: $\tilde c_{K_-},\ldots,\tilde c_{j_0-1}\in\R$, and $\Re\tilde c_\ell$ is known for $\ell\le 2j_0-K_--1$.}
\EndIf

\If{$|\Im\tilde c_k|^2=0$}
\State Set $\tilde c_k=\Re\tilde c_k\in\R$.
\EndIf

\EndFor

\State \textbf{Phase II (propagate imaginary parts if $j_0\neq 0$).}
\If{$j_0\neq 0$}
\For{$k=j_0+1$ to $\lfloor (K_-+K_+)/2\rfloor$}

\State \emph{(II-a)} For $\ell=k,\ldots,k+j_0-1-K_-$, compute $\Re\tilde c_\ell$ sequentially from $A_{K_-+\ell}$ using the same formula as in Phase~I.

\State \emph{(II-b)} Set $m=k+j_0$ and calcuate 
\[
R_m=
2\sum_{j=K_-+1}^{j_0-1}\tilde c_j\Re(\tilde c_{m-j})
+\sum_{j=j_0+1}^{k-1}\tilde c_{m-j}\overline{\tilde c_j},
\]
\[
\widetilde R_m=
2\sum_{j=K_-+1}^{j_0-1}j(m-j)\tilde c_j\Re(\tilde c_{m-j})
+\sum_{j=j_0+1}^{k-1}j(m-j)\tilde c_{m-j}\overline{\tilde c_j}.
\]
Compute
\[
\Re(\tilde c_{j_0}\overline{\tilde c_k})
=
\frac{K_-(m-K_-)(A_m-R_m)-(B_m-\widetilde R_m)}
     {2\,[K_-(m-K_-)-j_0k]} \ \ {\rm and} \  \
\Im\tilde c_k
=
\frac{\Re(\tilde c_{j_0}\overline{\tilde c_k})
-\Re\tilde c_{j_0}\Re\tilde c_k}{\Im\tilde c_{j_0}}. 
\]

\EndFor
\EndIf

\State {\bf Outputs:}  $ c_k=\tilde c_ke^{k^2}, K_-\le k\le K_+$.
\end{algorithmic}
\end{algorithm}

\bigskip 

{\bf Acknowledgments}:  The authors would like to thank the referees for their careful reading and helpful
comments, which improved the manuscript.

\appendix

\section{Proof of Lemma \ref{f.abs.lem}}
\label{lem.pr.sis}


Write
\[
f(x)=\sum_{k\in\mathbb{Z}} c_k e^{-\lambda(x-\beta k)^2},
\qquad c\in \ell^\infty(\mathbb{Z}).
\]
Then 
\begin{eqnarray*}
|f(x)|^2
&=&\sum_{k,l\in\mathbb{Z}} c_k\overline{c_l}
e^{-\lambda\beta^2(k^2+l^2)} e^{\lambda \beta^2 (k+l)^2/2}
e^{-2\lambda\left(x-\frac{\beta(k+l)}{2}\right)^2} \nonumber \\
&=&\sum_{n\in\mathbb{Z}}
\left(\sum_{k\in\mathbb{Z}} c_k \overline{c_{n-k}}
e^{-\lambda\beta^2 k^2} e^{-\lambda\beta^2(n-k)^2}\right)
e^{\lambda\beta^2 n^2/2} e^{-2\lambda(x-(n\beta)/2)^2}.
\end{eqnarray*}

Set
\begin{equation*}
d_k=c_k e^{-\lambda\beta^2 k^2}, 
r_n=\sum_{k\in\mathbb{Z}} c_k \overline{c_{n-k}}
e^{-\lambda\beta^2 k^2} e^{-\lambda\beta^2(n-k)^2}, \ {\rm and } \ 
\tilde r_n = r_n e^{\lambda\beta^2 n^2/2}.
\end{equation*}
From these definitions we see that
\begin{equation*}
r=d*\overline{d}
\quad\text{and}\quad
|f(x)|^2=\sum_{n\in\mathbb{Z}} \tilde r_n
e^{-2\lambda(x-(n\beta)/2)^2}.
\end{equation*}

If $c\in \ell^\infty(\mathbb{Z})$, then $\tilde r\in \ell^\infty(\mathbb{Z})$,
because
\begin{eqnarray*}
|\tilde r_n|
&=&\left|
\sum_{k\in\mathbb{Z}} c_k\overline{c_{n-k}}
e^{-\lambda\beta^2  k^2} e^{-\lambda\beta^2(n-k)^2}
\right| e^{\lambda\beta^2 n^2/2} \\
&\le& \|c\|_\infty^2
\sum_{k\in\mathbb{Z}} e^{-\lambda\beta^2(k^2+(n-k)^2-n^2/2)} =\|c\|_\infty^2
\sum_{k\in\mathbb{Z}} e^{-\lambda\beta^2 (n-2k)^2/2}.
\end{eqnarray*}
Setting
\[
C=\max\!\left(
\sum_{k\in\mathbb{Z}} e^{-2\lambda\beta^2 k^2},
\;
\sum_{k\in\mathbb{Z}} e^{-\lambda\beta^2(1-2k)^2/2}
\right),
\]
we have shown that
\begin{equation*}
\|\tilde r\|_\infty \le C \|c\|_\infty^2.
\end{equation*}
As a consequence $|f|^2\in V_{\frac{\beta}{2}, 2\lambda}^\infty$, as claimed.

\section{Proof of Lemma \ref{pr.entire.lem}}\label{appendix.sec}
 To prove Lemma \ref{pr.entire.lem}, we recall the well-known  Hadamard factorization theorem.

\begin{lem}\label{Hadamard.lem}{\rm(}Hadamard Factorization Theorem{\rm)} {\rm\cite{Branges} Let $f$ be an entire function of finite order $\rho$. Then $f$ has the unique factorization $$f(z)=z^{m}e^{P(z)}\prod_{a\in Z_f}E_{[\rho]}(a,z),$$
where $m\in \N$ is the multipliity of the zero at $z=0$, $P(z)$ is a polynomial of degree at most $\rho$, $Z_f$ is the multiset of nonzero zeros of $f$, counted with the multiplicity,  $[\rho]$ is the largest integer number not greater than $\rho$ and 
\begin{equation*}
 E_{[\rho]}(a,z)=
\begin{cases}
(1-\frac{z}{a} )& {\rm \ if}\quad [\rho]=0,\\
(1-\frac{z}{a})e^{(\frac{z}{a}+\frac{1}{2}\frac{z^{2}}{a^{2}}+\cdot\cdot\cdot+\frac{1}{[\rho]}(\frac{z}{a})^{[\rho]})}& {\rm otherwise}.
\end{cases}
\end{equation*} }
\end{lem}

\medskip 
\begin{proof}[Proof of Lemma \ref{pr.entire.lem}]
Let $f, g$ be the entire functions of finite order satisfying \eqref{f.lemma.eq} and denote   $Z_f$ and $Z_g$ their zero multisets with $0$ excluded respectively.  We first prove that there exist entire functions $u, v$ of finite order  such that $f=uv$ and $g=uv^\natural$. As $ff^\natural=gg^\natural$, the entire functions $f$ and $g$ are of the same order $\rho$.
  By Lemma \ref{Hadamard.lem},  the functions $f$ and $g$ can be decomposed as
 \begin{equation*}
 f(z)=z^{m}e^{P_f(z)}\prod_{a\in Z_f}E_{[\rho]}(a,z),
 \end{equation*}
 and
 \begin{equation*}
 g(z)=z^{n}e^{P_g(z)}\prod_{a\in Z_g}E_{[\rho]}(a,z).
 \end{equation*} Then their involution functions
  \begin{equation*}
 f^\natural(z)=(-1)^mz^{m}e^{P_f^\natural (z)}\prod_{a\in Z_f}E_{[\rho]}(-\bar a,z),
 \end{equation*}
 and
 \begin{equation*}
 g^\natural(z)=(-1)^nz^{n}e^{P_g^\natural(z)}\prod_{a\in Z_g}E_{[\rho]}(-\bar a,z).
 \end{equation*}
 By $ff^\natural =gg^\natural$, we have
 \begin{equation}\label{coeff.1}
 m=n, P_f+P_f^\natural=P_g+P_g^\natural{\rm \  and\ }Z_f\dot{\cup} Z_f^\natural =Z_g \dot{\cup} Z_g^\natural.
 \end{equation}

  Let $Z_0=Z_f\dot{\cap }Z_g$  be the multiset of the common elements in $Z_f$ and $Z_g$ with the multiplicities counted and $Z_1=Z_f\dot{\setminus }Z_0$ and $Z_2=Z_g\dot{\setminus} Z_0$.  By \eqref{coeff.1}, we have $Z_2=Z_1^\natural$. Define
 \begin{equation*}
 u(z)=z^m e^{\frac{P_f(z)+P_g(z)}{2}}\prod_{a\in Z_0} E_{[\rho]}(a,z){\rm\ and\ } v(z)=e^{\frac{P_f(z)-P_g(z)}{2}}\prod_{a\in Z_1} E_{[\rho]}(a,z).
 \end{equation*}
 Then we have \begin{equation}\label{f.decomp}
 f=uv{\rm\  and\ }g=uv^\natural.
 \end{equation}
 By $f'(f')^\natural=g'(g')^\natural$ and $(f^\natural)'=-(f')^\natural$,  a straightforward calculation based on  \eqref{f.decomp} yields 
\begin{equation*}
(u'u^\natural-u(u^\natural)')(v'v^\natural-v(v^\natural)')=0.
\end{equation*}
It implies that $u=\alpha u^\natural$ or $v=\alpha v^\natural$ for some unimodular  constant $\alpha\in\T$. This completes our proof.
 \end{proof}

\medskip

\end{document}